\documentclass[12pt,a4paper]{article}
\usepackage[T2A]{fontenc}
\usepackage[cp1251]{inputenc}
\usepackage{cmap}
\usepackage[english]{babel}
\usepackage{amsmath,amsfonts,amssymb, amsthm}
\usepackage{graphicx}
\usepackage{cite}
\usepackage{xcolor}
\usepackage{hyperref}

\theoremstyle{plain}
\newtheorem{theorem}{Theorem}
\newtheorem{lemma}{Lemma}[section]

\newtheorem{corollary}{Corollary}[theorem]

\theoremstyle{definition}
\newtheorem{note}{Remark}[section]
\newtheorem{example}{Example}[section]
\newtheorem{definition}{Definition}[section]

\usepackage[auth-sc]{authblk}

\title{On $\mathrm{K}\mathfrak F$-subnormality and submodularity \\in a finite group}
\author{Victor S. Monakhov}
\author{Irina L. Sokhor}
\affil{Department of Mathematics and Technologies of Programming,\\
Francisk Skorina Gomel State University, Belarus}
\affil{victor.monakhov@gmail.com, irina.sokhor@gmail.com}
\date{}
\begin{document}
\maketitle

{\footnotesize
\abstract{ Let $\mathfrak F$ be a formation and let $G$ be a group.
A subgroup~$H$ of~$G$ is $\mathrm{K}\mathfrak F$-subnormal (submodular) in $G$
if there is a subgroup  chain $H=H_0\le \  H_1 \le  \  \ldots \le H_i \leq H_{i+1}\le \ldots   \le \  H_n=G$
such that for every $i$ either $H_{i}$ is normal in $H_{i+1}$ or $H_{i+1}^\mathfrak{F} \le H_i$
($H_i$ is a modular subgroup of $H_{i+1}$, respectively). We prove that
a primary subgroup $R$ of a group $G$ is submodular in $G$ if and only if
$R$ is  $\mathrm{K}\mathfrak U_1$-subnormal in $G$.
Here $\mathfrak{U}_1$ is the class of all supersolvable groups of square-free exponent.
In addition, for a solvable subgroup-closed formation $\mathfrak{F}$,
every solvable $\mathrm{K}\mathfrak{F}$-subnormal subgroup of a group~$G$ is contained
in the solvable radical of $G$. }

}


\section{Introduction}
All groups in this paper are finite.
The formations of all nilpotent, supersolvable, solvable groups and all finite groups
are denoted by~$\mathfrak N$, $\mathfrak U$ and $\mathfrak S$,
respectively.

Let $\mathfrak{F}$ be a formation.
We say that $\mathfrak F$ is solvable if $\mathfrak F\subseteq \mathfrak S$.
The intersection of all normal subgroups of a group $G$ with quotient in $\mathfrak{F}$
is called the $\mathfrak{F}$-residual of $G$ and denoted by $G^\mathfrak{F}$.

The group structure in many respects depends on the way of embedding of subgroups.
In particular, subnormal subgroups have a defining value in the structure
of non-simple groups. It is well known that the concept of subnormality
is an extension of the concept of normality and, in contrast with normality,
it is a transitive relation. In the context of the formation theory,
Kegel~\cite{kegel} proposed the following extension of subnormality.

\begin{definition}
Let $\mathfrak F$ be a formation and let $G$ be a group.
A subgroup~$H$ of~$G$ is $\mathrm{K}\mathfrak F$-subnormal in $G$
if there is a subgroup  chain
\begin{equation}\label{eqkfsn}
H=H_0\le \  H_1 \le  \  \ldots \le H_i \leq H_{i+1}\le \ldots   \le \  H_n=G
\end{equation}
such that either $H_{i}$ is normal in $H_{i+1}$ or $H_{i+1}^\mathfrak{F} \le H_i$ for every $i$.
\end{definition}

It is clear that a subnormal subgroup of a group~$G$ is $\mathrm{K}\mathfrak F$-subnormal in $G$
for any formation~$\mathfrak F$, and every $\mathrm{K}\mathfrak N$-subnormal subgroup of $G$
is subnormal in $G$.

The results related to these concept are presented in
monographs~\cite{schem,dh,ks,ballcl}.

The concept of a modular subgroup came from the lattice theory.
A subgroup $H$ of a group $G$ is modular in $G$ if $H$ is a modular element of the
lattice of subgroups of $G$. R.~Schmidt~\cite{sch94} conducted a detailed analysis of
the behavior of modular subgroups in a group. Like normality, modularity is not a transitive relation.
The concept of submodularity is an extension of the concept of modularity, and
it is a transitive relation.

\begin{definition}[{\cite[p. 546]{sch94}}]
A subgroup $H$ of a group $G$ is submodular in $G$ if there is a subgroup chain~\eqref{eqkfsn}
such that $H_i$ is a modular subgroup of $H_{i+1}$ for every~$i$.
\end{definition}

In the present paper, we investigate groups with some $\mathrm{K}\mathfrak{F}$-subnormal subgroups,
and we also establish a connection between submodularity  and $\mathrm{K}\mathfrak{F}$-subnormality.
In particular, we obtained the following results.

\begin{theorem}\label{tsm}
A primary subgroup $Q$ of a group $G$ is submodular in $G$ if and only if
$Q$ is  $\mathrm{K}\mathfrak U_1$-subnormal in~$G$.
\end{theorem}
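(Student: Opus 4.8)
The plan is to prove the two implications separately; in each the strategy is to collapse to a one-step statement using the transitivity of both relations together with their behaviour on sub-intervals and on quotients, and then to feed in the lattice-theoretic structure theory of modular subgroups from \cite{sch94}. First I would record the routine facts. Both $\mathrm K\mathfrak U_1$-subnormality and submodularity are transitive (concatenate chains); a normal subgroup is modular, so a subnormal subgroup is submodular; both relations are inherited by sub-intervals, i.e.\ if $H\le U\le G$ and $H$ is $\mathrm K\mathfrak U_1$-subnormal (respectively submodular) in $G$ then so in $U$ --- for submodularity one intersects the chain with $U$ and uses that the meet of a modular element of a subgroup lattice with an arbitrary subgroup is modular in that subgroup, and for $\mathrm K\mathfrak U_1$-subnormality one uses that $\mathfrak U_1$ is a subgroup-closed formation, whence $X\le Y$ implies $X^{\mathfrak U_1}\le Y^{\mathfrak U_1}$; and both relations pass up and down along $G\mapsto G/N$ whenever $N\trianglelefteq G$ and $N\le H$.

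For ``$Q$ submodular $\Rightarrow$ $Q$ is $\mathrm K\mathfrak U_1$-subnormal'' the primary hypothesis is not needed: by transitivity it is enough to prove that an arbitrary modular subgroup $H$ of a group $G$ is $\mathrm K\mathfrak U_1$-subnormal in $G$, and then to apply this along a submodular chain of $Q$. I would induct on $|G|$. If the normal closure $H^{G}$ is proper, then $H$ is modular in $H^{G}$, hence $\mathrm K\mathfrak U_1$-subnormal there by induction, and the normal step $H^{G}\trianglelefteq G$ completes the chain. If $H^{G}=G$, I claim $G^{\mathfrak U_1}\le H$, giving the one-step chain $H\le G$. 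After factoring out the core $H_{G}$ this becomes the assertion that a core-free modular subgroup whose normal closure is the whole group forces the ambient group into $\mathfrak U_1$; this is exactly what Schmidt's description of modular subgroups yields --- such a group is assembled from non-abelian $P$-groups and cyclic sections of prime order, so it is supersolvable of square-free exponent.

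For the converse the primary hypothesis is essential. The crux is the Key Lemma that a $p$-subgroup $P$ of a group $V\in\mathfrak U_1$ is submodular in $V$, which one proves by induction on $|V|$: with $q$ the largest prime dividing $|V|$ and $S=O_{q}(V)$ the (normal, elementary abelian) Sylow $q$-subgroup, the case $p=q$ is immediate from $P\trianglelefteq S\trianglelefteq V$, while for $p<q$ one reduces modulo $S$ (using $PS/S\cong P$) and is left with the case $V=S\rtimes P$, handled by the chain $P\le PS_{1}\le\dots\le PS_{k}=V$ through a $V$-composition series $1=S_{0}<S_{1}<\dots<S_{k}=S$ of $S$: each step has index $q$, and --- this is where square-free exponent enters --- modulo $S_{i}$ the quotient has the form $C_{q}\rtimes P$ in which $P$, being elementary abelian, acts with cyclic image $C_{p}$ in $\mathrm{Aut}(C_{q})$, so modulo its core this quotient is a $P$-group and the step is modular. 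Granting this, the converse follows by induction on $|G|$: given a $\mathrm K\mathfrak U_1$-subnormal chain $Q=H_{0}\le\dots\le H_{n}=G$ with $H_{n-1}<G$, if $H_{n-1}\trianglelefteq G$ one recurses into $H_{n-1}$ and uses that $H_{n-1}$ is modular in $G$; if $G^{\mathfrak U_1}\le H_{n-1}$ one interposes $QG^{\mathfrak U_1}$, which is submodular in $G$ by the Key Lemma (its image in $G/G^{\mathfrak U_1}\in\mathfrak U_1$ is a $p$-subgroup), while $Q$ is submodular in $QG^{\mathfrak U_1}$ by induction --- the degenerate case $QG^{\mathfrak U_1}=G$ again forces $H_{n-1}\trianglelefteq G$, since then $G/G^{\mathfrak U_1}$ is elementary abelian; transitivity concludes.

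The two places I expect to need real work, both on the $\mathfrak U_1$-side, are the $H^{G}=G$ case of the first direction, which relies on Schmidt's analysis of core-free modular subgroups in \cite{sch94}, and the inductive heart of the Key Lemma, namely that the maximal steps $PS_{i}\le PS_{i+1}$ are modular. The latter is precisely where square-free exponent is indispensable, and it is also where the word ``primary'' becomes necessary: for a non-primary subgroup the relevant action need no longer be cyclic of prime order, and indeed in $G=C_{7}\rtimes C_{6}\in\mathfrak U_1$ the cyclic subgroup of order $6$ is $\mathrm K\mathfrak U_1$-subnormal (the $\mathfrak U_1$-residual of $G$ is trivial) yet is a non-modular maximal subgroup, hence not submodular.
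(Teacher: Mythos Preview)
Your argument is correct, and the overall architecture---reduce each direction to a one-step statement, then invoke the lattice-theoretic structure theory---matches the paper's.  The details differ in two places worth noting.

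For the forward direction the paper takes a shorter path: rather than splitting on whether $H^{G}=G$, it passes at once to a \emph{maximal} modular subgroup $M\ge H$ and quotes Schmidt's Lemma~5.1.2, which says directly that either $M\trianglelefteq G$ or $G/M_{G}$ is non-abelian of order $pq$, hence in $\mathfrak U_{1}$.  Your route via the normal closure is valid but leans on the finer structure theorem for core-free modular subgroups (that $H^{G}$ decomposes as a direct product of non-abelian $P$-groups), which you invoke somewhat informally; the maximal-modular shortcut avoids this.

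For the converse the paper does not isolate your Key Lemma as a separate statement; instead it runs the same induction with $Q^\star_{G}$ in place of $G^{\mathfrak U_{1}}$, and in the core-free case observes that $G\in\mathfrak U_{1}$ is then primitive, so $G=F(G)\rtimes Q^\star$ with $|F(G)|=r$ the largest prime, whence one more pass through the induction lands in the base case $|G|=rq$.  Your Key Lemma packages exactly this reduction, and your chain through a chief series of $S$ lands in the same base case $C_{q}\rtimes P$ with $P$ elementary abelian---where, after factoring out the core $C_{P}(C_{q})$, one has a non-abelian group of order $pq$ whose subgroup lattice is modular.  (Small slip: where you write ``modulo $S_{i}$'' you mean ``modulo $S_{i-1}$''.)  Your formulation has the advantage of making the role of the square-free exponent hypothesis fully explicit at one isolated point; the paper's version is a little more economical.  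Your closing counterexample $C_{7}\rtimes C_{6}$ is exactly the one the paper gives.
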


\begin{theorem}\label{t2}
Let $\mathfrak{F}$ be a solvable subgroup-closed formation.
If $A$ is a solvable $\mathrm{K}\mathfrak{F}$-subnormal subgroup of a group~$G$,
then $A\le G_{\mathfrak S}$.
\end{theorem}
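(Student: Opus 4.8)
The plan is to argue by induction on $|G|$. If $G$ is solvable there is nothing to prove, so I assume $G$ is non-solvable; then $A\neq G$, and I fix a chain \eqref{eqkfsn} for $A$ in $G$ of minimal length, which I may take strictly increasing. Hence $n\geq 1$ and $M:=H_{n-1}$ is a proper subgroup of $G$, and the initial segment $A=H_0\leq\cdots\leq H_{n-1}=M$ witnesses that $A$ is $\mathrm{K}\mathfrak F$-subnormal in $M$ (the defining alternative at each of these links does not involve the ambient group). Since $A$ is solvable and $|M|<|G|$, induction gives $A\leq M_{\mathfrak S}$. I will also use two standard facts valid for a subgroup-closed formation $\mathfrak F$: that $G/G^{\mathfrak F}\in\mathfrak F$, and that $\mathrm{K}\mathfrak F$-subnormality passes to intermediate subgroups, i.e.\ a $\mathrm{K}\mathfrak F$-subnormal subgroup of $G$ lying in $K\leq G$ is $\mathrm{K}\mathfrak F$-subnormal in $K$; the latter follows by intersecting a chain with $K$, using heredity of $\mathfrak F$ at the residual links.

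Now I split according to the last link $M\leq G$. If $M\trianglelefteq G$, then $M_{\mathfrak S}$ is characteristic in $M$, hence a solvable normal subgroup of $G$, so $M_{\mathfrak S}\leq G_{\mathfrak S}$ and $A\leq G_{\mathfrak S}$, as required. Otherwise $R:=G^{\mathfrak F}\leq M$; then $R\trianglelefteq G$, and $G/R\in\mathfrak F\subseteq\mathfrak S$ is solvable, while $R\leq M<G$. The key point is that $M_{\mathfrak S}$ and $R$ normalize each other ($M_{\mathfrak S}\trianglelefteq M\supseteq R$ and $R\trianglelefteq G\supseteq M_{\mathfrak S}$), so $[M_{\mathfrak S},R]\leq M_{\mathfrak S}\cap R$; moreover $M_{\mathfrak S}\cap R$ is a solvable normal subgroup of $R$, hence lies in $R_{\mathfrak S}$, which is characteristic in $R\trianglelefteq G$ and therefore contained in $G_{\mathfrak S}$. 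Consequently $[A,R]\leq[M_{\mathfrak S},R]\leq G_{\mathfrak S}$.

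Write $\overline G=G/G_{\mathfrak S}$, with overlines denoting images. The last inequality says that $\overline A$ centralizes $\overline R$, so $A$ is contained in the full preimage $C$ in $G$ of $C_{\overline G}(\overline R)$, and $C\trianglelefteq G$ since $\overline R\trianglelefteq\overline G$. If $C=G$ then $\overline R\leq Z(\overline G)$; but $Z(\overline G)$ is an abelian normal subgroup of $\overline G$, hence lies in the solvable radical of $\overline G$, which is trivial, so $\overline R=1$, i.e.\ $R\leq G_{\mathfrak S}$ is solvable — and then $G$, being an extension of the solvable group $R$ by the solvable group $G/R$, is solvable, contrary to assumption. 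Hence $C<G$. Since $A\leq C\leq G$ and $A$ is $\mathrm{K}\mathfrak F$-subnormal in $G$, it is $\mathrm{K}\mathfrak F$-subnormal in $C$; as $A$ is solvable and $|C|<|G|$, induction gives $A\leq C_{\mathfrak S}$, and $C_{\mathfrak S}$ is characteristic in $C\trianglelefteq G$, so $C_{\mathfrak S}\leq G_{\mathfrak S}$. Thus $A\leq G_{\mathfrak S}$ in this case too, completing the induction.

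The routine parts are the reduction to non-solvable $G$, the passage to $M$, and the normal case. The crux — and the step I expect to be the main obstacle — is the residual case $G^{\mathfrak F}\leq M$: the naive hope of forcing $M_{\mathfrak S}$ itself into $G_{\mathfrak S}$ fails in general, and the resolution is the mutual-normalization observation, which traps $[A,G^{\mathfrak F}]$ inside $M_{\mathfrak S}\cap G^{\mathfrak F}\leq (G^{\mathfrak F})_{\mathfrak S}\leq G_{\mathfrak S}$ and thereby places $A$ inside the proper normal subgroup $C$; it is precisely the normality of $C$ and the fact that $C=G$ would make $G$ solvable that make the induction close.
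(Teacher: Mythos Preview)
Your proof is correct and follows essentially the same route as the paper's: induction on $|G|$, pass to a penultimate subgroup $M$ with $A\le M_{\mathfrak S}$, and in the non-normal case use that $M_{\mathfrak S}$ and a suitable normal subgroup mutually normalize each other to force $A$ into the centralizer of that normal subgroup, which is either proper (close by induction) or all of $G$ (forcing $G$ solvable). The only cosmetic differences are that the paper first reduces to $G_{\mathfrak S}=1$ and uses the core $A^\star_G$ of a maximal $\mathrm{K}\mathfrak F$-subnormal overgroup, whereas you work modulo $G_{\mathfrak S}$ and use $G^{\mathfrak F}$ directly; your version avoids invoking the auxiliary Lemma~\ref{lkfmax}.
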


Here $G_{\mathfrak{S}}$ is the solvable radical of a group $G$, i.\,e. the largest normal
solvable subgroup of~$G$. A primary group is a group of prime power order.
By $\mathfrak{U}_1$ we denote the class of all supersolvable groups of square-free exponent.

\begin{corollary}\label{ct2}
Let $\mathfrak{F}$ be a solvable subgroup-closed formation.
If $A$ is a solvable $\mathrm{K}\mathfrak{F}$-subnormal subgroup of a group~$G$,
then $\langle A,B\rangle$ is solvable for any solvable subgroup~$B$ of $G$.
\end{corollary}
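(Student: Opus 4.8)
The plan is to put $H=\langle A,B\rangle$ and show $H$ is solvable by applying Theorem~\ref{t2} inside $H$ itself. The first step is to check that $\mathrm{K}\mathfrak{F}$-subnormality is inherited by intermediate subgroups, i.e. that $A$ is $\mathrm{K}\mathfrak{F}$-subnormal in $H$. To see this, take a chain $A=H_0\le H_1\le\ldots\le H_n=G$ witnessing the $\mathrm{K}\mathfrak{F}$-subnormality of $A$ in $G$ and intersect it with $H$, obtaining $A=H_0\cap H\le H_1\cap H\le\ldots\le H_n\cap H=H$. Whenever $H_i\trianglelefteq H_{i+1}$ we get $H_i\cap H\trianglelefteq H_{i+1}\cap H$, and whenever $H_{i+1}^{\mathfrak F}\le H_i$ we use that $(H_{i+1}\cap H)/(H_{i+1}\cap H\cap H_{i+1}^{\mathfrak F})$ embeds in $H_{i+1}/H_{i+1}^{\mathfrak F}\in\mathfrak F$; since $\mathfrak F$ is subgroup-closed this quotient lies in $\mathfrak F$, so $(H_{i+1}\cap H)^{\mathfrak F}\le H_{i+1}\cap H\cap H_{i+1}^{\mathfrak F}\le H_i\cap H$. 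Thus the intersected chain witnesses that $A$ is $\mathrm{K}\mathfrak{F}$-subnormal in $H$.

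Next I would apply Theorem~\ref{t2} with $G$ replaced by $H$: since $A$ is a solvable $\mathrm{K}\mathfrak{F}$-subnormal subgroup of $H$, we obtain $A\le H_{\mathfrak S}$, the solvable radical of $H$. Now consider the quotient $H/H_{\mathfrak S}$. As $H=\langle A,B\rangle$ and the image of $A$ in $H/H_{\mathfrak S}$ is trivial, we get $H/H_{\mathfrak S}=BH_{\mathfrak S}/H_{\mathfrak S}\cong B/(B\cap H_{\mathfrak S})$, which is a homomorphic image of the solvable group $B$ and hence solvable. On the other hand, $H/H_{\mathfrak S}$ has trivial solvable radical by the maximality of $H_{\mathfrak S}$. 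A solvable group with trivial solvable radical is trivial, so $H=H_{\mathfrak S}$, i.e. $\langle A,B\rangle$ is solvable.

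Essentially everything here is routine once Theorem~\ref{t2} is available; the only point requiring care is the inheritance of $\mathrm{K}\mathfrak{F}$-subnormality to the subgroup $H$, where the hypothesis that $\mathfrak F$ is subgroup-closed is exactly what makes the residuals behave correctly under intersection. If the paper has already recorded this inheritance property as a lemma, the proof collapses to the two-line argument of the second paragraph.
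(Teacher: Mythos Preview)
Your proof is correct. The paper's argument is even shorter and avoids the passage to $H=\langle A,B\rangle$ altogether: it applies Theorem~\ref{t2} directly in $G$ to obtain $A\le G_{\mathfrak S}$, whence $\langle A,B\rangle\le G_{\mathfrak S}B$, and $G_{\mathfrak S}B$ is solvable because $G_{\mathfrak S}B/G_{\mathfrak S}\cong B/(B\cap G_{\mathfrak S})$ is a quotient of the solvable group $B$. Thus the inheritance of $\mathrm{K}\mathfrak F$-subnormality to $H$---which is indeed recorded as Lemma~\ref{lfsn}\,(4)---is not even invoked. Your route via $H_{\mathfrak S}$ reaches the same conclusion through the same solvable-by-solvable observation, just localized one layer down; both arguments are routine once Theorem~\ref{t2} is available, and the paper's version simply skips the unnecessary descent to $H$.
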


\begin{corollary}\label{corgs}
If $A$ is a solvable submodular subgroup of a group $G$, then ${A\le G_{\mathfrak{S}}}$.
In particular, if $A$ is a solvable submodular subgroup a group~$G$,
then $\langle A,B\rangle$ is solvable for any solvable subgroup~$B$ of $G$.
\end{corollary}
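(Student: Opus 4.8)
The plan is to reduce Corollary~\ref{corgs} to the \emph{primary} case treated in Theorem~\ref{tsm} and then apply Theorem~\ref{t2}. First I would record that $\mathfrak U_1$ is a solvable subgroup-closed formation: the class of supersolvable groups is a subgroup-closed formation, the class of groups of square-free exponent is closed under subgroups, epimorphic images and finite subdirect products (the exponent of a subgroup of $G_1\times G_2$ divides $\operatorname{lcm}(\exp G_1,\exp G_2)$, which is square-free whenever $\exp G_1$ and $\exp G_2$ are), and $\mathfrak U_1$ is the intersection of these two classes and is plainly contained in $\mathfrak S$. Consequently, by Theorem~\ref{tsm} a primary submodular subgroup of any group is $\mathrm{K}\mathfrak U_1$-subnormal in it, and then Theorem~\ref{t2} (with $\mathfrak F=\mathfrak U_1$) shows that such a subgroup lies in the solvable radical of the ambient group.

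Now let $A$ be a solvable submodular subgroup of $G$, and set $\overline G=G/G_{\mathfrak S}$ and $\overline A=AG_{\mathfrak S}/G_{\mathfrak S}$. Then $\overline A$ is solvable, and since submodularity is inherited by epimorphic images (a standard property of submodular subgroups), $\overline A$ is submodular in $\overline G$; moreover $\overline G_{\mathfrak S}=(G/G_{\mathfrak S})_{\mathfrak S}=1$. It therefore suffices to show that a solvable submodular subgroup $\overline A$ of a group with trivial solvable radical must be trivial, for then $AG_{\mathfrak S}=G_{\mathfrak S}$, that is, $A\le G_{\mathfrak S}$.

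So suppose, for contradiction, that $\overline A\neq1$ and let $N$ be a minimal normal subgroup of $\overline A$. Since $\overline A$ is solvable, $N$ is an elementary abelian $p$-group for some prime $p$, hence a primary subgroup. As $N\trianglelefteq\overline A$, it is subnormal and therefore submodular in $\overline A$, and by transitivity of submodularity $N$ is submodular in $\overline G$. By the observation of the first paragraph (applied inside $\overline G$), $N\le\overline G_{\mathfrak S}=1$, a contradiction. Hence $\overline A=1$ and $A\le G_{\mathfrak S}$.

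Finally, the ``in particular'' statement follows at once: if $A\le G_{\mathfrak S}$ and $B$ is any solvable subgroup of $G$, then $\langle A,B\rangle\le G_{\mathfrak S}B$ since $G_{\mathfrak S}\trianglelefteq G$, and $G_{\mathfrak S}B$ is solvable, being an extension of the solvable group $G_{\mathfrak S}$ by the solvable group $B/(B\cap G_{\mathfrak S})$. Since the deep content is already packaged into Theorems~\ref{tsm} and~\ref{t2}, I do not expect any serious obstacle here; the only slightly technical points are the verification that $\mathfrak U_1$ is a subgroup-closed formation and the fact that submodularity descends to quotient groups, and both of these are routine and standard.
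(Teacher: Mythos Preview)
Your argument is correct, but it takes a longer route than the paper. The paper's proof is essentially one line: it invokes Lemma~\ref{tsm1}, which already asserts that \emph{every} submodular subgroup---not only a primary one---is $\mathrm{K}\mathfrak U_1$-subnormal; Theorem~\ref{t2} (with $\mathfrak F=\mathfrak U_1$) then gives $A\le G_{\mathfrak S}$ directly, and the ``in particular'' clause is handled exactly as you do. By restricting yourself to Theorem~\ref{tsm}, which characterises submodularity only for primary subgroups, you are forced to manufacture a reduction to that case: pass to $\overline G=G/G_{\mathfrak S}$, pick a minimal normal subgroup $N$ of $\overline A$ (elementary abelian since $\overline A$ is solvable), and use transitivity of submodularity to place $N$ inside $\overline G_{\mathfrak S}=1$. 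This detour is valid---the inheritance and transitivity facts you quote are all in Lemma~\ref{lsubm}---but it introduces quotient-and-radical bookkeeping that evaporates once one uses Lemma~\ref{tsm1}. What your approach does illustrate is that the full strength of Lemma~\ref{tsm1} is not strictly required for this corollary: the primary case of Theorem~\ref{tsm} together with Theorem~\ref{t2} already suffices.
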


\section{Preliminaries}\label{spr}

If $X$ is a subgroup (proper subgroup, normal subgroup, maximal subgroup)  of a group~$Y$,
then we write $X\le Y$ ($X<Y$, $X\lhd Y$, $X\lessdot  \ Y$, respectively).

If $X\le Y$, then $X_Y=\cap _{y\in Y}X^y$ is the core of~$X$ in~$Y$.
For a formation $\mathfrak F$ and $X\le Y$,
the inclusion $Y^\mathfrak F\le X$ is equivalent to~$Y/X_Y\in \mathfrak F$.
Obviously, $G^\mathfrak H\le G^\mathfrak F$ for formations $\mathfrak F\subseteq \mathfrak H$,
in particular, $G^\mathfrak S\le G^\mathfrak U\le G^\mathfrak N$.

Remind the properties of $\mathrm{K}\mathfrak{F}$-subnormal
subgroups that we use.

\begin{lemma}[{\cite[6.1.6, 6.1.7, 6.1.9]{ballcl}}]\label{lfsn}
Let $\mathfrak{F}$ be a formation, let $H$ and $L$ be subgroups of a group $G$,
and let $N\lhd G$. The following statements hold.

$(1)$~If $H$ is $\mathrm{K}\mathfrak{F}$-subnormal in  $L$
and $L$ is $\mathrm{K}\mathfrak{F}$-subnormal in  $G$,
then $H$ is $\mathrm{K}\mathfrak{F}$-subnormal in  $G$.

$(2)$~If $N \leq H$ and $H/N$ is $\mathrm{K}\mathfrak{F}$-subnormal in
$G/N$, then $H$ is $\mathrm{K}\mathfrak{F}$-subnormal in  $G$.

$(3)$~If $H$ is $\mathrm{K}\mathfrak{F}$-subnormal in  $G$,
then $HN/N$ is $\mathrm{K}\mathfrak{F}$-subnormal in  $G/N$
and $HN$ is $\mathrm{K}\mathfrak{F}$-subnormal in  $G$.

$(4)$~If $\mathfrak{F}$ is a subgroup-closed formation and $H$ is
$\mathrm{K}\mathfrak{F}$-subnormal in  $G$, then $H\cap L$ is
$H \cap L$ is $\mathrm{K}\mathfrak{F}$-subnormal in  $L$.
\end{lemma}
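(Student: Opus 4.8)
The plan is to derive all four assertions by elementary surgery on a $\mathrm{K}\mathfrak F$-subnormal chain of the form \eqref{eqkfsn}; the single non-formal ingredient is the behaviour of the $\mathfrak F$-residual under homomorphic images, namely $(G/N)^{\mathfrak F}=G^{\mathfrak F}N/N$ for $N\lhd G$. I would record this identity first: it follows from the characterization recalled above (that $Y^{\mathfrak F}\le X$ is equivalent to $Y/X_Y\in\mathfrak F$) together with the fact that a formation is closed under homomorphic images — on the one hand $G/G^{\mathfrak F}N$ is a quotient of $G/G^{\mathfrak F}\in\mathfrak F$, so $(G/N)^{\mathfrak F}\le G^{\mathfrak F}N/N$; on the other, writing $(G/N)^{\mathfrak F}=K/N$ forces $G/K\in\mathfrak F$, hence $G^{\mathfrak F}\le K$ and $G^{\mathfrak F}N/N\le K/N$.

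For (1), concatenating a $\mathrm{K}\mathfrak F$-subnormal chain from $H$ to $L$ with one from $L$ to $G$ produces a chain from $H$ to $G$ each of whose links is again either a normal inclusion or satisfies the residual inequality. For (2), I would lift the chain $H/N=H_0/N\le\dots\le H_n/N=G/N$ to $H=H_0\le\dots\le H_n=G$: a normal link stays normal under the correspondence, and $(H_{i+1}/N)^{\mathfrak F}\le H_i/N$ becomes $H_{i+1}^{\mathfrak F}\le H_{i+1}^{\mathfrak F}N\le H_i$ via the residual-image identity. For (3), project a chain for $H$ onto $G/N$ through $H_i\mapsto H_iN/N$: if $H_i\lhd H_{i+1}$ then $H_iN\lhd H_{i+1}N$ (conjugation by an element of $N\subseteq H_iN$ preserves the subgroup $H_iN$, while conjugation by an element of $H_{i+1}$ preserves $H_i$ and $N$, hence $H_iN$), so $H_iN/N\lhd H_{i+1}N/N$; and if $H_{i+1}^{\mathfrak F}\le H_i$ then $(H_{i+1}N/N)^{\mathfrak F}=H_{i+1}^{\mathfrak F}N/N\le H_iN/N$. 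This makes $HN/N$ a $\mathrm{K}\mathfrak F$-subnormal subgroup of $G/N$, and then part (2), applied to the subgroup $HN\ge N$, gives that $HN$ is $\mathrm{K}\mathfrak F$-subnormal in $G$.

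For (4), I would intersect a chain for $H$ with $L$, $H_i\mapsto H_i\cap L$; a normal link is clearly preserved. On a residual link, the second isomorphism theorem yields $(H_{i+1}\cap L)/(H_{i+1}^{\mathfrak F}\cap L)\cong (H_{i+1}\cap L)H_{i+1}^{\mathfrak F}/H_{i+1}^{\mathfrak F}\le H_{i+1}/H_{i+1}^{\mathfrak F}\in\mathfrak F$, so, since $\mathfrak F$ is subgroup-closed, $(H_{i+1}\cap L)/(H_{i+1}^{\mathfrak F}\cap L)\in\mathfrak F$ and therefore $(H_{i+1}\cap L)^{\mathfrak F}\le H_{i+1}^{\mathfrak F}\cap L\le H_i\cap L$. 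The only step that goes beyond chain bookkeeping is this last one, and the hypothesis that $\mathfrak F$ be subgroup-closed is exactly what makes it work, since in general $(H_{i+1}\cap L)^{\mathfrak F}$ need not be contained in $H_{i+1}^{\mathfrak F}$; I expect no other obstacle, parts (1)–(3) requiring nothing beyond the residual-image identity.
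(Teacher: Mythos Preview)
Your argument is correct in every part; the chain manipulations and the use of the residual-image identity $(G/N)^{\mathfrak F}=G^{\mathfrak F}N/N$ are exactly what is needed, and your handling of part~(4) via the second isomorphism theorem and subgroup-closure of $\mathfrak F$ is the standard one. Note, however, that the paper does not actually supply a proof of this lemma: it is quoted verbatim as \cite[6.1.6, 6.1.7, 6.1.9]{ballcl} and used as a black box, so there is no in-paper argument to compare against --- your write-up is essentially the proof one finds in that reference.
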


\begin{lemma}\label{lkfmax}
Let $\mathfrak F$ be a subgroup-closed formation.
If a proper subgroup $H$ of a group $G$ is $\mathrm{K}\mathfrak F$-subnormal in~$G$,
then there is a subgroup~$M$ of~$G$  such that~$H\le M$, $H$ is $\mathrm{K}\mathfrak F$-subnormal in~$M$,
and either $M$ is normal in~$G$ and~$G/M$ is a simple group or $M$ is a maximal subgroup of~$G$
and~$G/M_G\in \mathfrak F$.
\end{lemma}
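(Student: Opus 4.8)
The plan is to read off $M$ from the top of a $\mathrm{K}\mathfrak F$-subnormal chain for $H$ in $G$. Fix a chain $H=H_0\le H_1\le\ldots\le H_n=G$ as in~\eqref{eqkfsn} and delete repetitions, so that all inclusions are proper and the condition in the definition is preserved step by step; since $H<G$ we have $n\ge 1$. Put $W=H_{n-1}$. Then $W<G$, and $H$ is $\mathrm{K}\mathfrak F$-subnormal in $W$ via $H_0\le\ldots\le H_{n-1}$. Applying the definition to the final step $W\le G$, either $W\lhd G$ or $G^\mathfrak F\le W$, and these two cases will yield the two alternatives of the statement.

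Suppose first that $W\lhd G$. Then $G/W$ is a nontrivial finite group, hence possesses a maximal normal subgroup; let $M$ be its full preimage in $G$, so that $W\le M\lhd G$ and $G/M$ is simple (if $G/W$ is already simple, take $M=W$). Since $W\lhd G$ and $W\le M$, we have $W\lhd M$, so adjoining $M$ to the chain $H_0\le\ldots\le H_{n-1}=W$ produces a $\mathrm{K}\mathfrak F$-subnormal chain from $H$ to $M$. This gives the first alternative, with $M\lhd G$ and $G/M$ simple.

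Suppose instead that $G^\mathfrak F\le W$. Since $W<G$, choose a maximal subgroup $M$ of $G$ with $W\le M<G$. As $G^\mathfrak F$ is normal in $G$ and contained in $M$, it lies in $M_G$, so $G/M_G$ is an epimorphic image of $G/G^\mathfrak F\in\mathfrak F$; because $\mathfrak F$ is a formation, $G/M_G\in\mathfrak F$. Furthermore $M/G^\mathfrak F\le G/G^\mathfrak F\in\mathfrak F$, and since $\mathfrak F$ is subgroup-closed we get $M/G^\mathfrak F\in\mathfrak F$, whence $M^\mathfrak F\le G^\mathfrak F\le W$. Therefore $W$ is $\mathrm{K}\mathfrak F$-subnormal in $M$ (indeed $M^\mathfrak F\le W$ directly), and concatenating with the chain from $H$ to $W$ shows that $H$ is $\mathrm{K}\mathfrak F$-subnormal in $M$. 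This gives the second alternative.

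Once the penultimate term $W$ of an irredundant chain is isolated, the argument is essentially bookkeeping, and the degenerate cases ($n=1$, $W=M$, or $G\in\mathfrak F$) are subsumed by the same wording. The only place where the hypothesis on $\mathfrak F$ is genuinely used is the implication $G/G^\mathfrak F\in\mathfrak F\Rightarrow M/G^\mathfrak F\in\mathfrak F$ in the second case: subgroup-closedness of $\mathfrak F$ is precisely what guarantees $M^\mathfrak F\le W$, and hence that the maximal subgroup $M$ can legitimately be inserted below $G$ in a $\mathrm{K}\mathfrak F$-subnormal chain. I do not anticipate any obstacle beyond being careful with these degenerate configurations.
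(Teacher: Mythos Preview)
Your proof is correct and follows essentially the same approach as the paper: isolate the penultimate term $W=H_{n-1}$ of a $\mathrm{K}\mathfrak F$-subnormal chain and pass to a maximal normal subgroup (if $W\lhd G$) or a maximal subgroup (if $G^{\mathfrak F}\le W$) above it. The only cosmetic difference is that the paper verifies $\mathrm{K}\mathfrak F$-subnormality of $H$ in $M$ by citing Lemma~\ref{lfsn}\,(4), whereas you do it by direct chain extension using subgroup-closedness.
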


\begin{proof}
By definition, there is a subgroup chain~\eqref{eqkfsn} such that either $H_{i}$ is normal in $H_{i+1}$
or $H_{i+1}^\mathfrak{F} \le H_i$ for every $i$. Let $H_{n-1}$ be normal in $G$.
In that case, there is a maximal normal subgroup $M$ of $G$ such that $H_{n-1}\leq M$.
In view of Lemma~\ref{lfsn}\,(4), $H$ is $\mathrm{K}\mathfrak F$-subnormal in $M$ and $G/M$ is simple.
Assume that $G^\mathfrak{F} \leq H_{n-1}\leq M\lessdot G$. In that case,
$G^\mathfrak{F}\leq M_G$ and $G/M_G\in\mathfrak{F}$.
In view of Lemma~\ref{lfsn}\,(4), $H$ is $\mathrm{K}\mathfrak F$-subnormal in $M$.
\end{proof}

\begin{note}
In the sequel, for a $\mathrm{K}\mathfrak F$-subnormal subgroup~$H$ of a group~$G$,
we use~$H^\star$ to denote a maximal $\mathrm{K}\mathfrak{F}$-subnormal subgroup of~$G$
that contains~$H$. By Lemma~\ref{lkfmax}, $H$ is $\mathrm{K}\mathfrak F$-subnormal in~$H^\star$
and either~$H^\star$ is normal in $G$ and $G/H^\star$ is a simple group
or $H^\star$ is a maximal subgroup of $G$ and $G/H^\star_G\in \mathfrak F$.
We write $H^\star_G$ instead of  $(H^\star)_G$ for short.
\end{note}

\section{Submodularity and $\mathrm{K}\mathfrak {U}_1$-subnormality}\label{ssmod}

The following lemma contains the well-known properties of submodular subgroups.

\begin{lemma}[{\cite[Lemma~1]{zim89}}]\label{lsubm}
Let $G$ be a group, let $H$ and $K$ be subgroups of~$G$ and let $N$ be a normal subgroup of~$G$.

$(1)$~If $H$ is submodular in $K$ and $K$ is submodular in $G$, then $H$ is submodular in~$G$.

$(2)$~If $H$ is submodular in $G$, then $H\cap K$ is submodular in $K$.

$(3)$~If $H/N$ is submodular in $G/N$, then $H$ is submodular in $G$.

$(4)$~If $H$ is submodular in $G$, then $HN/N$ is submodular in $G/N$ and $HN$ is submodular in $G$.

$(5)$~If $H$ is subnormal in $G$, then $H$ is submodular in $G$.
\end{lemma}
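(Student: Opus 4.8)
The plan is to obtain each of the five statements by unwinding the definition of submodularity and appealing, link by link, to the corresponding elementary property of \emph{modular} subgroups recorded in \cite{sch94}: a normal subgroup is modular; if $M$ is modular in $B$ and $C\le B$ then $M\cap C$ is modular in $C$; if $N\lhd B$, $N\le M$ and $M/N$ is modular in $B/N$ then $M$ is modular in $B$; and if $M$ is modular in $B$ and $N\lhd B$ then $MN/N$ is modular in $B/N$. Given a submodular chain for the subgroup in question, I would perform on it the natural operation --- concatenation, intersection with $K$, passage to $G/N$, or multiplication by $N$ --- and then verify that the chosen property above keeps every link modular.

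Concretely: for (1) I would concatenate a submodular chain from $H$ to $K$ with one from $K$ to $G$. For (5), a subnormal chain $H=H_0\lhd H_1\lhd\ldots\lhd H_n=G$ is itself a submodular chain, since each $H_i$ is normal, hence modular, in $H_{i+1}$. For (2), starting from a submodular chain $H=H_0\le H_1\le\ldots\le H_n=G$ I would pass to $H\cap K=H_0\cap K\le H_1\cap K\le\ldots\le H_n\cap K=K$; here $H_i\cap K=H_i\cap(H_{i+1}\cap K)$ is modular in $H_{i+1}\cap K$ by the intersection property applied to $H_i$ modular in $H_{i+1}$ and $H_{i+1}\cap K\le H_{i+1}$.

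For (3), I would lift a submodular chain $H/N=L_0/N\le\ldots\le L_n/N=G/N$ of $G/N$ to $H=L_0\le\ldots\le L_n=G$; since $N\lhd L_{i+1}$ and $L_i/N$ is modular in $L_{i+1}/N$, the correspondence property gives $L_i$ modular in $L_{i+1}$. For (4), I would project a submodular chain of $H$: writing $\overline{X}=XN/N$ and using $H_{i+1}N/N\cong H_{i+1}/(H_{i+1}\cap N)$ together with $H_{i+1}\cap N\lhd H_{i+1}$, the image of $H_iN/N$ is modular because $H_i$ is modular in $H_{i+1}$; thus $HN/N$ is submodular in $G/N$, and then $HN$ is submodular in $G$ by (3), since $N\le HN$.

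The genuinely non-formal ingredients are the two facts about modular subgroups used in (2)--(4) --- preservation under intersection with a subgroup, and compatibility with the canonical correspondence between the subgroup lattices of $G$ and $G/N$ --- and these are precisely the substantive results in \cite{sch94}. So the main difficulty is imported rather than proved here: once those facts are granted, each of (1)--(5) is a short bookkeeping argument along a chain, and no induction on $|G|$ is needed.
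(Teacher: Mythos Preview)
The paper does not prove this lemma: it is quoted from \cite[Lemma~1]{zim89} and stated without argument, so there is no proof here to compare your proposal against. Your sketch is the standard derivation and is correct --- each of (1)--(5) reduces, link by link along a submodular chain, to an elementary closure property of modular subgroups (preservation under intersection with a subgroup, and compatibility with passage to and from a quotient by a normal subgroup), all of which are supplied by \cite{sch94} --- so you have in fact provided more than the paper itself does.
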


Notice that a subgroup $M$ of a group $G$ is a maximal modular subgroup
if $M$ satisfies the following conditions:
$M$ is a proper subgroup of~$G$;
$M$ is a modular subgroup of~$G$;
if $M<K<G$, then $K$ is not modular in~$G$.

\begin{lemma}[{\cite[Lemma~5.1.2]{sch94}}]\label{l512}
If $M$ is a maximal modular subgroup of a group $G$, then either $M$ is normal in $G$ and
$G/M$ is simple or $G/M_G$ is non-abelian of order $pq$ for primes $p$ and~$q$.
\end{lemma}

\begin{lemma}\label{tsm1}
Every submodular subgroup is $\mathrm{K}\mathfrak{U}_1$-subnormal in a group.
\end{lemma}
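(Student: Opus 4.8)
The plan is to reduce the statement to the case of a \emph{maximal} modular subgroup and then invoke Lemma~\ref{l512}, exploiting the transitivity of $\mathrm{K}\mathfrak{U}_1$-subnormality from Lemma~\ref{lfsn}\,(1). So first I would let $H$ be a submodular subgroup of $G$ and argue by induction on $|G:H|$. If $H=G$ there is nothing to prove. Otherwise, by the definition of submodularity there is a chain~\eqref{eqkfsn} with each $H_i$ modular in $H_{i+1}$; in particular $H$ is modular in $H_1$ and (by Lemma~\ref{lsubm}\,(1)) $H_1$ is submodular in $G$ with $|G:H_1|<|G:H|$, so by induction $H_1$ is $\mathrm{K}\mathfrak{U}_1$-subnormal in $G$. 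By Lemma~\ref{lfsn}\,(1) it then suffices to show that $H$, being a modular subgroup of $H_1$, is $\mathrm{K}\mathfrak{U}_1$-subnormal in $H_1$. Renaming $H_1$ as $G$, I have reduced to: \emph{a modular subgroup $H$ of $G$ is $\mathrm{K}\mathfrak{U}_1$-subnormal in $G$}.

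Next I would take a modular subgroup $H<G$ and enlarge it to a maximal modular subgroup $M$ with $H\le M<G$ (such $M$ exists since the modular subgroups containing $H$ form a nonempty finite poset, $H$ itself being one of them, and a maximal element is maximal modular by definition). A modular subgroup is submodular in any overgroup in which it is modular — in fact $H$ is modular in $M$ as well, since modularity is inherited by the interval $[H,M]$ of the subgroup lattice (this is the standard lattice-theoretic fact about modular elements: if $H$ is a modular element of $L(G)$ and $H\le M$, then $H$ is a modular element of $L(M)$; it is recorded in Schmidt's book). Hence $H$ is submodular in $M$ with $|M:H|<|G:H|$ when $M<G$, or $H=M$; either way an inductive step plus transitivity reduces the problem to showing that the \emph{maximal} modular subgroup $M$ is $\mathrm{K}\mathfrak{U}_1$-subnormal in $G$.

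Now apply Lemma~\ref{l512} to $M$. In the first case $M\lhd G$, so $M$ is in particular subnormal in $G$, hence $\mathrm{K}\mathfrak{U}_1$-subnormal in $G$ (a normal subgroup sits in a $\mathrm{K}\mathfrak F$-subnormal chain trivially, for any formation). In the second case $G/M_G$ is non-abelian of order $pq$; such a group is supersolvable (it has a normal Sylow subgroup with cyclic quotient) and has square-free exponent $pq$, so $G/M_G\in\mathfrak U_1$, which gives $G^{\mathfrak U_1}\le M_G\le M$. Together with $M<G$ this exhibits the two-term chain $M\le G$ witnessing that $M$ is $\mathrm{K}\mathfrak U_1$-subnormal in $G$. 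Combining with the reductions above — transitivity of $\mathrm{K}\mathfrak U_1$-subnormality through $H\le M\le H_1\le G$ — yields that $H$ is $\mathrm{K}\mathfrak U_1$-subnormal in $G$.

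The main obstacle I anticipate is the purely lattice-theoretic claim that modularity passes to intervals, i.e. that a modular subgroup $H$ of $G$ contained in a maximal modular overgroup $M$ is still modular \emph{in} $M$; one must be careful that "maximal modular subgroup" is defined via modularity in $G$, not in $M$, and check that the induction on index is genuinely decreasing at each step (which it is, since $H<H_1$, and either $H=M$ or $H<M<G$). Once that bookkeeping is in place, the argument is a clean induction: peel off one modular step, apply Lemma~\ref{l512} to a maximal modular subgroup, observe $pq$-groups lie in $\mathfrak U_1$, and glue with Lemma~\ref{lfsn}\,(1).
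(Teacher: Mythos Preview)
Your proposal is correct and follows essentially the same route as the paper: enlarge to a maximal modular subgroup, apply Lemma~\ref{l512} to see that either it is normal or the quotient by its core is a non-abelian group of order $pq\in\mathfrak U_1$, and then conclude by induction together with the transitivity in Lemma~\ref{lfsn}\,(1). The paper's version is slightly leaner---it inducts on $|G|$ and immediately passes to a maximal modular $M$ of $G$ with $H$ submodular in $M$, rather than first reducing to the modular case via induction on $|G:H|$---but the substance is the same.
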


\begin{proof}
Use induction on the order of a group. Let $H$ be a submodular subgroup of a group $G$.
In that case, there is a maximal modular subgroup $M$ of $G$ such that $H$ is submodular in  $M$.
By induction, $H$ is $\mathrm{K}\mathfrak{U}_1$-subnormal in $M$. If $M$ is normal in $G$,
then $M$ is $\mathrm{K}\mathfrak{U}_1$-subnormal in $G$. Assume that $M$ is not normal in  $G$.
Then $G/M_G\in\mathfrak{U}_1$ in view of Lemma~\ref{l512} and $M$ is
$\mathrm{K}\mathfrak{U}_1$\nobreakdash-sub\-nor\-mal in~$G$.
Therefore $H$ is $\mathrm{K}\mathfrak{U}_1$-subnormal in~$G$ by Lemma~\ref{lfsn}\,(1).
\end{proof}

\begin{example}
The Frobenius group $F_7=C_7\rtimes C_6\in\mathfrak{U}_1$, hence every subgroup of $F_7$ is
$\mathfrak{U}_1$-subnormal in $F_7$, but at the same time the maximal subgroup $C_6$
is not submodular in $F_7$. Therefore the converse of Lemma~\ref{tsm1} is  not true in general.
\end{example}


\begin{proof}[Proof of Theorem~{\upshape\ref{tsm}}]
Use induction on the order of a group. If $Q$ is a submodular primary subgroup of a group~$G$,
then $Q$ is $\mathrm{K}\mathfrak{U}_1$-subnormal in~$G$ by Lemma~\ref{tsm1}.

Conversely, assume that $Q$ is a $\mathrm{K}\mathfrak{U}_1$-subnormal $q$-subgroup of~$G$
for a prime $q\in\pi(G)$. By induction, $Q$ is submodular in $Q^\star $. If $Q^\star $ is
normal in $G$, then $Q^\star $ is modular in $G$, and $Q$ is submodular in $G$
by Lemma~\ref{lsubm}\,(1).

Let $G/Q^\star_G\in \mathfrak{U}_1$. Suppose that $Q^\star_G\neq 1$.
By Lemma~\ref{lfsn}\,(3), $QQ^\star_G/Q^\star_G$ is $\mathrm{K}\mathfrak{U}_1$-subnormal
in $G/Q^\star_G$, and by induction, $QQ^\star_G/Q^\star_G$ is submodular in $G/Q^\star_G$.
Hence $QQ^\star_G$ is  submodular in $G$ by Lemma~\ref{lsubm}\,(3). By Lemma~\ref{lfsn}\,(4),
$Q$ is $\mathrm{K}\mathfrak{U}_1$-subnormal in $QQ^\star_G$. Since $QQ^\star_G\leq Q^\star <G$,
we get by induction that $Q$ is submodular in $QQ^\star_G$.
Consequently, $Q$ is submodular in $G$ by Lemma~\ref{lsubm}\,(1).

Now assume that $Q^\star_G=1$. In that case, $G\in \mathfrak{U}_1$, in particular,
$G$ is a supersolvable primitive group. Therefore in view of~\cite[Theorem~1.1.7,
1.1.10]{ballcl},
$G=F(G)\rtimes Q^\star$ and $F(G)$ is a Sylow $r$-subgroup for $r=\max\pi(G)$, $|F(G)|=r>q$.
By Lemma~\ref{lfsn}\,(3), $QF(G)/F(G)$ is $\mathrm{K}\mathfrak{U}_1$-subnormal in $G/F(G)$.
Consequently,  $QF(G)/F(G)$ is submodular in $G/F(G)$ by induction and $QF(G)$ is submodular in $G$.
By Lemma~\ref{lfsn}\,(4), $Q$ is $\mathrm{K}\mathfrak{U}_1$-subnormal in $QF(G)$.
If $QF(G)<G$, then $Q$ is submodular in $QF(G)$ by induction, and so $Q$ is submodular in $G$.
Let $G=F(G)\rtimes Q$. In that case, $Q$ is a cyclic subgroup and $|G|=rq$,
since $G\in \mathfrak{U}_1$. Consequently, $Q$ is submodular in~$G$.
\end{proof}


\begin{proof}[Proof of Theorem~{\upshape\ref{t2}}]
Obviously, we can assume that~$A\ne 1$ and~$G\notin \mathfrak S$.
Use induction on the order of a group.  If $N$ is a non-trivial normal subgroup of~$G$,
then~$AN/N$ is a solvable $\mathrm{K}\mathfrak{F}$-subnormal subgroup of~$G/N$
and~$AN/N\le (G/N)_{\mathfrak S}$ by induction. Denote $H/N=(G/N)_{\mathfrak S}$.
If $N\in \mathfrak S$, then $H\in \mathfrak S$ and~$A\le AN\le H\le G_{\mathfrak S}$,
i.\,e. the statement is true. Therefore we can assume that~$G_{\mathfrak S}=1$,
and so $G$ has no non-trivial subnormal solvable subgroups.

By induction, we have $A\le A^\star_{\mathfrak S}$.
If~$A^\star$ is normal in~$G$, then $A^\star_{\mathfrak S}$ is a non-trivial subnormal
solvable subgroup, a contradiction. Therefore $A^\star$ is a non-normal maximal subgroup of $G$
and~$G/A^\star_G\in \mathfrak S$ by Lemma~\ref{lkfmax}. Since $A^\star_{\mathfrak S}$ and
$A^\star_G$ are non-identity normal subgroups of $A^\star$ and $A^\star_{\mathfrak S}$ is normal in~$G$,
we deduce that $A^\star_{\mathfrak S}\cap A^\star_G$ is a subnormal solvable subgroup of~$G$,
$A^\star_{\mathfrak S}\cap A^\star_G=1$ and~$A\le A^\star_{\mathfrak S}\le C_G(A^\star_G)$.

Denote $C=C_G(A^\star_G)$. If~$C<G$, then $A\le C_{\mathfrak S}$ by induction, and
$C_{\mathfrak S}$~ is a non-trivial subnormal solvable subgroup of~$G$, a contradiction.
Hence $C=G$ and~$A^\star_G=Z(G)\in\mathfrak{S}$. Consequently, $G\in \mathfrak S$,
a contradiction.
\end{proof}


\begin{proof}[Proof of Corollary~{\upshape\ref{ct2}}]
Since $A\leq G_\mathfrak{S}$ by Theorem~\ref{t2}, we have $\langle A,B\rangle\leq G_\mathfrak{S} B$.
As $G_\mathfrak{S}\in \mathfrak{S}$  and
$G_\mathfrak{S} B/G_\mathfrak{S}\cong B/B\cap G_\mathfrak{S}\in \mathfrak{S}$,
we deduce that $\langle A,B\rangle$ is solvable.
\end{proof}


\begin{proof}[Proof of Corollary~{\upshape\ref{corgs}}]
By Lemma~\ref{tsm1}, $A$ is $\mathrm{K}\mathfrak{U}_1$-subnormal in $G$.
By Theorem~\ref{t2}, ${A\le G_{\mathfrak{S}}}$. Since $\langle A,B\rangle \le G_{\mathfrak{S}}B$
and $G_{\mathfrak{S}}B$ is solvable, we get $\langle A,B\rangle$ is solvable.
\end{proof}

{\small

}
\end{document}